\newtheorem{theorem}{\bf Theorem}
\newtheorem{proposition}[theorem]{\bf Proposition}
\newtheorem{lemma}[theorem]{\bf Lemma}
\newtheorem{corollary}[theorem]{\bf Corollary}
\newtheorem{eje}[theorem]{Example}
\newtheorem{rem}[theorem]{Remark}
\newtheorem{nota}[theorem]{Notation}
\renewcommand{\theequation}%
  {\thesection.\arabic{equation}}
\newcommand{\spec}{\operatorname{{Spec}}}
\newcommand{\fix}{\operatorname{{Fix}}}
\newcommand{\tra}{\operatorname{{\emph{Tr(A)}}}}
\newcommand{\dt}{\operatorname{{\emph{Det(A)}}}}
\newcommand{\no}{\noindent}
\newcommand{\mm}{\medskip}
\newcommand{\R}{\mathbb{R}}
\begin{document}

\title{\begin{LARGE}Planar maps whose second iterate has a unique fixed point \end{LARGE}}
\author{Begoña Alarcón\\ Carlos Gutierrez \\José Martínez-Alfaro}

\maketitle

\begin{center}
\begin{small}

Dep. Matematica, U. Jaume I. Castellon, Spain.
 bego.alarcon@uv.es

ICMC-USP, Sao Carlos - SP. Brazil. gutp@icmc.usp.br

Dep. Matematica Aplicada, U. Valencia. Spain. martinja@uv.es

\end{small}
\end{center}

\vspace{20pt}

\begin{abstract} Let $\epsilon>0,\;$ $F\colon \R^2 \rightarrow \R^2$ be a differentiable
(not necessarily $C^1$) map and  $\spec(F)$ be the set of (complex)
eigenvalues of the derivative $DF_p$ when $p$ varies in $\R^2$.

\mm \no (a)\;  If $ \spec(F) \cap [1,1+\epsilon [=\emptyset, \; $
then $\#\fix(F)\le 1,$ where $\fix(F)$ denotes the set of fixed
points of $F.$

\no (b) \; If  $ \spec(F) \cap \R =\emptyset, \; $ then
$\#\fix(F^2)\le 1.$

\no (c) \; If  $F$ is a $C^1$ map and
 for all $p \in \mathbb{R}^2$ $\;DF_p\;$ is neither a homothety nor
has simple real eigenvalues, then $\#\fix(F^2)\le 1,$ provided that
either $\spec (F) \cap \left( \{x\in\mathbb{R} : |x|\ge 1 \; \}\cup
\{0\}\right)=\emptyset$ or $\spec(F)\cap \{x\in\mathbb{R} : |x|\le
1+\varepsilon \}=\emptyset.$

  \mm  Conditions under which $\fix(F^n),$ with $n\in\mathbb{N},$ is
at most unitary are considered.
\end{abstract}

\vspace{20pt}

\begin{footnotesize}

J. Mart\'\i nez-Alfaro and B. Alarc\'on thank the partial support by
CNPQ, Programa hispano-brasileño de cooperación: HBP2002-0026,
Proyecto MTM2004-03244, Spain. C. Gutierrez thanks the partial
support by FAPESP Grant 03/03107-9 and by CNPq Grants 470957/2006-9
and 306328/2006-2, Brazil.

Keywords: Planar map, embedding, periodic orbit, fixed point.

\end{footnotesize}

\newpage

\section{Introduction}

In this article we continue the work done in \cite{Fernandes4} where
the following is proved:

\begin{theorem} \label{teofiny}
Let $F:\mathbb{R}^2 \rightarrow \mathbb{R}^2$ be a differentiable
(not necessarily $C^1$) map such that, for some $\varepsilon>0,$ $
\spec(F) \cap [0,\varepsilon [=\emptyset.
$
Then $F$ is injective.
\end{theorem}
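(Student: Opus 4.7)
The plan is to argue by contradiction: assuming $F$ fails to be injective, we aim to produce a point at which $DF$ has a real eigenvalue in $[0,\varepsilon[$, contradicting the hypothesis. The technique I would use is the half-Reeb component machinery developed by Gutierrez and collaborators in the planar Jacobian setting.

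First, the assumption gives $0\notin\spec(F)$, so $DF_p$ is invertible for every $p$; combined with differentiability this makes $F$ a local homeomorphism (a point that needs some care since the classical inverse function theorem wants $C^1$, but local injectivity can be recovered from the uniform spectral gap). Picking $p_0\ne p_1$ with $F(p_0)=F(p_1)$, I would then consider the two ``coordinate foliations'' $\mathcal{F}_1,\mathcal{F}_2$ of $\mathbb{R}^2$, given by the connected components of the level sets of the two coordinate functions $F_1,F_2$ of $F$. Invertibility of $DF$ makes these foliations mutually transverse. The equation $F(p_0)=F(p_1)$ forces $p_0$ and $p_1$ into the same leaf of each foliation, and from this I expect to extract a \emph{half-Reeb component} of one of them: a topological half-disk whose boundary consists of a segment of a leaf together with a compact transverse arc, all of whose interior leaves accumulate on the boundary leaf.

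Second, I would exploit the half-Reeb component to force the forbidden eigenvalue. The reason the hypothesis asks for an interval $[0,\varepsilon[$ rather than only $\{0\}$ is visible here: inside the component, flow-box coordinates provide a one-parameter family of directions transverse to the foliation, and along this family an intermediate-value argument on $\tr(DF)$ and $\det(DF)$ restricted to the appropriate direction produces a point where $DF$ has a real positive eigenvalue strictly less than $\varepsilon$, contradicting the spectral hypothesis.

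The principal obstacle will be that $F$ is only differentiable, not $C^1$. The usual Frobenius-style integration of the distributions $\ker dF_i$ is not available, so the two foliations must be constructed purely topologically from connected components of level sets of continuous functions, and any regularity needed for the eigenvalue extraction in the second step (locally rectifiable leaves, a workable notion of holonomy, continuity of the directions along transversals) has to be squeezed out of pointwise differentiability alone. Bridging this gap between the topological structure of level sets and the pointwise differential information carried by $\spec(F)$ is, I expect, the delicate core of the argument.
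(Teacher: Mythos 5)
A preliminary point: the paper does not prove this statement. Theorem~\ref{teofiny} is quoted from \cite{Fernandes4}, so there is no in-paper proof to compare yours against; the relevant benchmark is the argument of Fernandes--Gutierrez--Rabanal (and its antecedents in Cobo--Gutierrez--Llibre and Gutierrez--Van Chau), whose overall strategy --- foliations by connected components of level sets of the coordinate functions, extraction of a half-Reeb component from non-injectivity, and a final step converting the half-Reeb component into an eigenvalue in $[0,\varepsilon[$ --- your outline correctly identifies.

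As a proof, however, your text is a road map rather than an argument: both load-bearing steps are announced (``I expect to extract a half-Reeb component'', ``I would exploit the half-Reeb component'') but not carried out, and these are exactly the hard parts of \cite{Fernandes4}. Two concrete problems. First, the assertion that $F(p_0)=F(p_1)$ forces $p_0$ and $p_1$ into the same \emph{leaf} of each foliation is false: they lie in the same \emph{level set} of each coordinate function, but a leaf is a connected component of a level set, and since $F_2$ is strictly monotone along each leaf of $\mathcal{F}_1$ (this is where local injectivity is used), the points $p_0$ and $p_1$ must lie in \emph{different} leaves of $\mathcal{F}_1$. It is precisely this configuration --- two distinct leaves of $\mathcal{F}_1$ inside one level set of $F_1$, separated by the transverse foliation --- that gets parlayed into a half-Reeb component, and that construction (choice of the transverse arc, control of which interior leaves escape, verification of the non-compact edge) is the substantial topological work you have skipped. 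Second, the eigenvalue-extraction step is described only as ``an intermediate-value argument on $\tr(DF)$ and $\det(DF)$ restricted to the appropriate direction''; as stated this does not produce the contradiction, because knowing trace and determinant along a family of directions does not by itself locate a point where a prescribed direction is an eigendirection with eigenvalue in $[0,\varepsilon[$. The mechanism actually used in the literature is a perturbation of one coordinate function (e.g.\ replacing $f$ by $f-sy$ for small $s>0$) combined with a tangency/index argument inside the half-Reeb component, and it is there --- not merely in the foliation-theoretic setup --- that the interval $[0,\varepsilon[$, as opposed to the single value $0$, is genuinely consumed. So the approach is the right one, but the two essential bridges are not built, and one of the stated intermediate claims is incorrect as written.
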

Here, $\spec(F)$ denotes the set of (complex) eigenvalues of the
derivative $DF_p$ when $p$ varies in $\R^2$ and $\fix(F)$ denotes
the set of fixed points of $F$. An easy consequence of this
theorem is the following:
\begin{corollary} \label{propptofijo}
Let $F:\mathbb{R}^2 \rightarrow \mathbb{R}^2$ be a differentiable
map such that for some $\varepsilon>0$ $ \spec(F) \cap
[1,1+\epsilon [=\emptyset, $ then $\#\fix(F)\le 1.$
\end{corollary}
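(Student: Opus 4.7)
The plan is to reduce the fixed-point counting problem to the injectivity statement provided by Theorem \ref{teofiny}. The natural auxiliary map to consider is
\[
G\colon \mathbb{R}^2\to\mathbb{R}^2, \qquad G(p) = F(p) - p,
\]
since $\fix(F) = G^{-1}(0)$. Therefore, proving $G$ injective is more than enough to conclude $\#\fix(F)\le 1$.

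The key computation is to identify $\spec(G)$ in terms of $\spec(F)$. Since $G$ is differentiable (as the difference of differentiable maps), at each $p\in\mathbb{R}^2$ one has $DG_p = DF_p - \id$. A direct linear-algebra observation (shifting a $2\times 2$ matrix by a scalar multiple of the identity shifts its spectrum) gives
\[
\spec(DG_p) = \{\lambda - 1 : \lambda \in \spec(DF_p)\}.
\]
Taking the union over $p\in\mathbb{R}^2$, I obtain $\spec(G) = \spec(F) - 1$.

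Now I use the hypothesis $\spec(F)\cap[1,1+\varepsilon[=\emptyset$, which translates immediately into $\spec(G)\cap[0,\varepsilon[=\emptyset$. By Theorem \ref{teofiny} applied to $G$, the map $G$ is injective, so it has at most one zero, proving that $\#\fix(F)\le 1$.

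There is essentially no obstacle here: the only nontrivial content of the corollary is the spectral translation trick that lets the hypothesis about $1$ being separated from $\spec(F)$ become the hypothesis of Theorem \ref{teofiny} about $0$ being separated from $\spec(G)$. The rest is an immediate bookkeeping step turning injectivity of the displacement map into uniqueness of fixed points of $F$.
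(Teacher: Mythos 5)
Your proof is correct and follows exactly the same route as the paper: you form the displacement map $G = F - \id$, observe that its spectrum is the translate $\spec(F)-1$, apply Theorem~\ref{teofiny} to get injectivity of $G$, and conclude that $G$ has at most one zero, i.e.\ $F$ has at most one fixed point. No differences worth noting.
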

\begin{proof}
Since $F$ is a differentiable map, $\Gamma=F-Id$, it is also a
differentiable map. If $\lambda \in \spec(F)$, then $\lambda-1 \in
\spec(\Gamma)$. Therefore, $\exists \; \epsilon >0$ such that
$\spec(\Gamma)\cap[0,\epsilon)= \emptyset $. Then $\Gamma$ is
injective. Suppose that there exists $ p,q \in \mathbb{R}^2$ such
that $F(p)=p$ and $F(q)=q.$ Then,
$\Gamma(p)=F(p)-p=0=F(q)-q=\Gamma(q)$ and so $ p=q$
\end{proof}
We wanted to know which spectral condition on the derivative of a
planar map would be sufficient to guarantee that the second
iterate of the map had at most one fixed point. The main results
of the article are the following:
\begin{theorem}\label{pdosper} Let $F= (f,g): \mathbb{R}^2 \rightarrow
\mathbb{R}^2$ be a differentiable map such that $ \spec(F) \cap
\mathbb{R} = \emptyset. $ Then, $\#\fix(F^2)\le 1.$
\end{theorem}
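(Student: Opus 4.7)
The plan is to split $\fix(F^2)$ into the fixed points of $F$ and genuine $2$-periodic orbits of $F$, and to handle each part separately using the hypothesis $\spec(F)\cap\mathbb{R}=\emptyset$.

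For the fixed points of $F$, I would apply Corollary~\ref{propptofijo} directly: since no element of $\spec(F)$ is real, the interval $[1,1+\varepsilon)$ is trivially disjoint from $\spec(F)$ for every $\varepsilon>0$, so $\#\fix(F)\le 1$.

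The main step is to rule out genuine $2$-periodic orbits, i.e.\ pairs $p\ne q$ with $F(p)=q$ and $F(q)=p$. I would introduce the auxiliary map $G:=F+\id$. Its derivative is $DG_x=DF_x+\id$, so the eigenvalues of $DG_x$ are precisely those of $DF_x$ shifted by~$1$; in particular $\spec(G)\cap\mathbb{R}=\emptyset$, and hence $\spec(G)\cap[0,\varepsilon)=\emptyset$ for every $\varepsilon>0$. Theorem~\ref{teofiny} then tells us that $G$ is injective. But if $\{p,q\}$ were a genuine $2$-periodic orbit, the linear identity $G(p)=F(p)+p=q+p=F(q)+q=G(q)$ would contradict injectivity. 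Thus $\fix(F^2)=\fix(F)$, and the conclusion follows from the previous step.

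The step I expect to be most crucial, and the one a first attempt would miss, is the choice of the auxiliary map. Trying to apply Corollary~\ref{propptofijo} to $F^2$ does not work: products of $2\times 2$ matrices with non-real eigenvalues can have real (even positive) eigenvalues, so in general $\spec(F^2)$ may meet $[1,1+\varepsilon)$. The whole point of replacing $F^2$ by $F+\id$ is that $2$-periodic orbits are exactly the obstruction to injectivity of $F+\id$, while the spectral hypothesis on $F$ is tailor-made so that $F+\id$ satisfies the hypotheses of Theorem~\ref{teofiny}.
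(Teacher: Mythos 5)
Your proof is correct, but it takes a genuinely different route from the paper. You reduce everything to Theorem~\ref{teofiny} applied to the auxiliary map $F+\id$: since $\spec(F+\id)=\{\lambda+1:\lambda\in\spec(F)\}$ avoids $\mathbb{R}$, that map is injective, and the identity $(F+\id)(p)=p+q=(F+\id)(q)$ for a $2$-cycle $\{p,q\}$ kills all genuine period-$2$ orbits; Corollary~\ref{propptofijo} applied to $F$ itself then finishes the argument. The paper instead works directly with $F^2$: it introduces the angular displacement functions $G_A(\theta)=\Phi_A(\theta)-\theta$, shows via Lemma~\ref{lemangulo} and Lemma~\ref{pdife} that the sign of $\partial g/\partial x$ is constant on $\R^2$, so that $G^1_p(\R)$ lies in $]0,\pi[$ for all $p$ simultaneously (or in $]\pi,2\pi[$ for all $p$), and concludes from Lemma~\ref{angular} that $\spec(F^2)\cap[0,\infty[=\emptyset$, after which Corollary~\ref{propptofijo} applies to $F^2$. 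Your observation that Corollary~\ref{propptofijo} cannot be applied to $F^2$ ``in general'' is true for arbitrary products of matrices with non-real spectrum, but note that the paper's point is precisely that for the derivatives of a single map this obstruction does not occur. What each approach buys: yours is shorter and entirely elementary given Theorem~\ref{teofiny}; the paper's yields the stronger spectral conclusion $\spec(F^2)\cap[0,\infty[=\emptyset$, and its angular machinery is what carries over to the degenerate situation of Theorem~\ref{pdosperdos} (where real double eigenvalues are permitted, so that $\spec(F+\id)$ may accumulate on $0$ and your trick no longer applies) and to the study of higher iterates in Section~\ref{Maps F}.
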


\begin{theorem} \label{pdosperdos} Let $\varepsilon>0$ and $F: \mathbb{R}^2 \rightarrow
\mathbb{R}^2 $ be a $C^1$ map such that,
 for all $p \in \mathbb{R}^2$, $DF_p$ is neither a homothety nor
has simple real eigenvalues. If either
\begin{enumerate}
  \item [\emph{(a)}]
  $\spec (F) \cap (\{x\in\mathbb{R} : |x|\ge 1\}\cup\{0\})=\emptyset,$
  or

  \item [\emph{(b)}]  $\spec(F)\cap \{x\in\mathbb{R} : |x|\le 1+\varepsilon \}=\emptyset,$
\end{enumerate}
then $\#\fix(F^2)\le 1.$
\end{theorem}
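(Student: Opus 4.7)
My plan is to reduce the theorem to ruling out strict period-$2$ orbits and then derive a contradiction from the existence of such an orbit via an integrated mean-value identity. In both cases (a) and (b) the hypothesis yields $\spec(F)\cap[1,1+\delta)=\emptyset$ for some $\delta>0$ (take $\delta=1$ in (a), since real eigenvalues of $DF_p$ lie in $(-1,1)\setminus\{0\}$, and $\delta=\varepsilon$ in (b), since they satisfy $|\lambda|>1+\varepsilon$). Corollary~\ref{propptofijo} then gives $\#\fix(F)\le 1$. Since a strict period-$2$ orbit $\{p,q\}$ of $F$ contributes two distinct points to $\fix(F^2)$, proving $\#\fix(F^2)\le 1$ is equivalent to ruling out such an orbit.

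Assume for contradiction that $F(p)=q$, $F(q)=p$, with $p\ne q$. Since $F$ is $C^1$, applying the fundamental theorem of calculus to $t\mapsto F(p+t(q-p))$ yields
\begin{equation*}
  p-q=F(q)-F(p)=\Bigl(\int_{0}^{1}DF_{p+t(q-p)}\,dt\Bigr)(q-p),
\end{equation*}
so the averaged matrix $M:=\int_0^1 DF_{p+t(q-p)}\,dt$ has $-1$ as an eigenvalue, with eigenvector $q-p$.

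The hypothesis that $DF_x$ is neither a homothety nor has simple real eigenvalues forces $\tr(DF_x)^2\le 4\det(DF_x)$ everywhere, with equality exactly on the Jordan-block locus $V=\{x:\tr(DF_x)^2=4\det(DF_x)\}$; moreover, under (a) or (b) every real eigenvalue of $DF_x$ is strictly separated from $-1$. Using the $2\times 2$ identity $\det(M+I)=\det(M)+\tr(M)+1$ together with the pointwise discriminant inequality and the quantitative spectral separation, the aim is to bound $\det(M+I)$ uniformly away from zero, contradicting $-1\in\spec(M)$.

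The main obstacle is that $\{A\in M_2(\R):\tr(A)^2\le 4\det(A)\}$ is \emph{not} convex, so the averaged matrix $M$ can acquire simple real eigenvalues even though no individual $DF_x$ does, and the pointwise exclusion of $-1$ from $\spec(DF_x)$ does not transfer automatically to $M$. Overcoming this will require the \emph{quantitative} separations in (a) or (b), not merely $-1\notin\spec(F)$. I expect cases (a) and (b) to need separate treatment, and the Jordan-block locus $V$---the only place where $\spec(F)\cap\R$ is actually nonempty---to require special care.
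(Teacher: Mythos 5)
Your reduction (apply Corollary~\ref{propptofijo} to get $\#\fix(F)\le 1$, then rule out genuine period-$2$ orbits) and the mean-value identity for the averaged Jacobian $M=\int_0^1 DF_{p+t(q-p)}\,dt$ are both correct, but the proof stops exactly where the work begins: you never show $\det(M+I)\ne 0$, you only state it as an ``aim'' and correctly identify the obstruction (non-convexity of the negative-discriminant locus). That obstruction is fatal for this route if all you use is pointwise spectral information. Concretely, the matrices
$A=\left(\begin{smallmatrix} 1/2 & 3\\ 0 & 1/2\end{smallmatrix}\right)$ and
$B=\left(\begin{smallmatrix} 1/2 & 0\\ -3 & 1/2\end{smallmatrix}\right)$
each have the double eigenvalue $1/2$, are not homotheties, and satisfy hypothesis (a); yet $M=\tfrac12(A+B)$ has $\det(M+I)=\det M+\tr M+1=(\tfrac14-\tfrac94)+1+1=0$, i.e.\ $-1\in\spec(M)$. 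So no uniform lower bound on $\det(M+I)$ can follow from the hypotheses on the individual $DF_x$ alone; you would need to exploit the integrability constraint that these matrices are Jacobians of one $C^1$ map along a segment, and you give no mechanism for doing so. As written, the proposal is an unproved plan with a genuine gap at its core.

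The paper avoids averaging entirely: it applies Corollary~\ref{propptofijo} to $F^2$ rather than to $F$, so the relevant matrix is the \emph{product} $D(F^2)_p=DF_{F(p)}\,DF_p$, not an integral mean. The key tool is the angular displacement function $G_A(\theta)=\Phi_A(\theta)-\theta$: Lemma~\ref{lemangdo} shows that for a matrix with a double nonzero real eigenvalue, $G_A(\R)$ lies in $[0,\pi]$ or in $[\pi,2\pi]$, touching the endpoints without crossing; a connectedness argument (using that no $DF_p$ is a homothety and $F$ is $C^1$) forces one of these alternatives to hold globally. Then either some factor has non-real spectrum, giving $\spec(D(F^2)_p)\cap[0,\infty)=\emptyset$ outright, or a zero of $G_p^2$ forces $DF_p$ and $DF_{F(p)}$ to share an eigendirection, so the positive real eigenvalue of the product is the product $\lambda_p\lambda_{F(p)}$ of the double eigenvalues, whose modulus is $<1$ under (a) and $>(1+\varepsilon)^2$ under (b). That multiplicativity along a common eigendirection is the quantitative ingredient your averaged matrix cannot see.
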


As shown above there is a strong connection between injectivity of
maps and uniqueness of fixed points. Embeddings from one euclidian
space into itself that generate a discrete dynamical system with a
unique fixed point that is a global attractor and other questions
about stability can be found for instance in (see
\cite{Alarcon},\cite{agg}, \cite{Szego}, \cite{Cima} ,
\cite{Libro-Jacobian-Conjectur}, \cite{Coll}, \cite{LaSalle},
\cite{Ortega_degree} and \cite{Robinson}). The pioneer work of C.
Olech \cite{o,o1} and also \cite{Markus} showed the existence of a
strong connection between the global asymptotic stability of a
vector field $X:\mathbb{R}^2 \rightarrow \mathbb{R}^2$ and the
injectivity of $X$ (considered as a map). This connection was
strengthened and broadened in subsequent works (see for instance
\cite{cgl,Fernandes3,Fernandes4,g,gr1,gs,gt,gv2,gv1}).

\medskip

Given a differentiable map $F:\R^2\to\R^2$ and  $n\in \mathbb{N}$,
we have found conditions on $\spec(F)$ such that
$\spec(F^n)\cap[1,1+\varepsilon[=\emptyset.$ In this way, using
Corollary~\ref{propptofijo}, we were able to ensure that
$\#\fix(F^n) \le 1.$ As planar maps without periodic points are
very rare, the best results are for $n=1,2.$ Section~\ref{Theorem
pdosper} is devoted to prove Theorem~\ref{pdosper}.
Theorem~\ref{pdosperdos} is proved in Section~\ref{Theorem
pdosperdos}. Section~\ref{Maps F} is devoted to study the case
$n\ge 3.$

\medskip

\section{Proof of Theorem~\ref{pdosper}}\label{Theorem pdosper}

Let $A,B$ nonsingular linear maps on $\R^2;$ it may happen that
$(\spec(A)\cup\spec(B))\cap \R = \emptyset$ but $\spec(AB) = \{1\}$
(just take $B=A^{-1}$). Nevertheless, we shall prove that, under
conditions of Theorem~\ref{pdosper} and using the Chain Rule to
compute $D(F^2),$ that $\spec(F^2)\cap [0,\infty[ = \emptyset.$ Then
the proof of Theorem~\ref{pdosper} will follow from
Corollary~\ref{propptofijo}. To that end we shall introduce the
function $G_A$ below.

\mm

A non singular linear map on $\R^2$, defined by $A$:
$$
\left(
\begin{matrix}
a_{11}&a_{12}\\
a_{21}&a_{22}
\end{matrix}
\right)
$$
determines the continuous map $\Phi_A:\mathbb{R} \to \mathbb{R}$
by the following conditions: $\Phi_A(0) \in [0,2 \pi[$ and
$\Phi_A$ sends $\theta = \arg (v)$ to $\theta_1 = \arg (Av).$

%

\medskip

We also define the map:
$$
G_{A}(\theta ) = \Phi_{A} (\theta ) - \theta .
$$

Some elementary properties of $G_A$ are collected in the following

\begin{rem}\label{rGM}\qquad

\medskip

\noindent\emph{(a)} If $ G_{A}(\alpha ) = 2n\pi$, with $n \in
\mathbb{Z}$, the line $ x=\cos \alpha, y=\sin \alpha $ is the
invariant eigenspace associated to a real positive eigenvalue of
$A$.

\medskip

\noindent\emph{(b)} If $ G_{A}(\alpha ) = (2n+1)\pi$, with $n \in
\mathbb{Z}$, the line $ x=\cos \alpha, y=\sin \alpha $ is the
invariant eigenspace associated to a negative eigenvalue of $A$.

\medskip

\noindent\emph{(c)} Assume that $\spec(A) \cap \mathbb{R} =
\emptyset$. By (a) and (b) right above and the fact that $G_A(0)
\in [0,2\pi[$, the graph of $G_A(\theta)$ is contained in either
$\mathbb{R} \times ]0, \pi[$ or $\mathbb{R} \times ]\pi, 2\pi[$.

\medskip

\noindent\emph{(d)} If $A$ is an homothety, $G_A(\theta)$ is
constant and equal to $0$ or $\pi$.
\end{rem}

Given two matrices $A$ and $B$ we want to find conditions so that $
\Phi_{ A B }$ does not have fixed points or, equivalently, that the
function $G_{AB}:\mathbb{R} \to [0,2\pi[$ given by
$$
G_{AB}(\theta ) = \Phi_{AB} (\theta ) - \theta .
$$
has no  zeros.

\begin{lemma} \label{lemangulo}

Let $$ A=\left( \begin{array}{cc}

  a_{11} & a_{12} \\

  a_{21} & a_{22} \\

\end{array}\right)
$$
be such that
$$
 \spec(A)\cap \mathbb{R} = \emptyset.
$$
Then $a_{21}\ne 0;$ moreover,

\begin{itemize}

\item[] If $a_{21}>0$, then \; $G_A(\mathbb{R})\subset ]0,\pi[.$

\item[] If $a_{21}<0$, then \; $G_A(\mathbb{R})\subset ]\pi,2\pi[.$

\end{itemize}

\end{lemma}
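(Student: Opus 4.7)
The plan is to reduce the claim to Remark~\ref{rGM}(c) by evaluating $G_A$ at a single convenient angle, once we have ruled out $a_{21}=0$.

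First, I would dispose of the statement $a_{21}\neq 0$ by contradiction. If $a_{21}=0$, the matrix $A$ is upper triangular, so $\spec(A)=\{a_{11},a_{22}\}\subset\mathbb{R}$, contradicting the hypothesis $\spec(A)\cap\mathbb{R}=\emptyset$. (Equivalently, the discriminant of the characteristic polynomial, $(a_{11}-a_{22})^2+4a_{12}a_{21}$, must be strictly negative, which forces $a_{12}a_{21}<0$ and in particular $a_{21}\neq 0$.)

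Next, I would exploit that Remark~\ref{rGM}(c) already confines the graph of $G_A$ to exactly one of the two horizontal strips $\mathbb{R}\times(0,\pi)$ and $\mathbb{R}\times(\pi,2\pi)$. Therefore it suffices to compute $G_A$ at a single value of $\theta$ to decide which strip applies. The natural choice is $\theta=0$: taking $v=(\cos 0,\sin 0)=(1,0)$ gives $Av=(a_{11},a_{21})$, whence $\Phi_A(0)=\arg(a_{11}+i\,a_{21})$, taken in $[0,2\pi)$ by the normalization in the definition of $\Phi_A$. Since $a_{21}\neq 0$, the vector $Av$ lies off the real axis. If $a_{21}>0$, then $Av$ is in the open upper half-plane, so $\Phi_A(0)\in(0,\pi)$, and thus $G_A(0)=\Phi_A(0)\in(0,\pi)$. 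If $a_{21}<0$, then $Av$ is in the open lower half-plane, so $\Phi_A(0)\in(\pi,2\pi)$, and $G_A(0)\in(\pi,2\pi)$.

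Finally, combining this base-point computation with Remark~\ref{rGM}(c) yields that $G_A(\mathbb{R})\subset(0,\pi)$ when $a_{21}>0$ and $G_A(\mathbb{R})\subset(\pi,2\pi)$ when $a_{21}<0$, which is precisely the conclusion. There is no real obstacle here; the only subtle point is the normalization convention for $\Phi_A(0)$, which is exactly what lets the sign of $a_{21}$ single out the correct strip.
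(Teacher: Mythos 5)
Your proposal is correct and follows essentially the same route as the paper: rule out $a_{21}=0$ via the reality of the eigenvalues of a triangular matrix, use the connectedness/continuity fact (packaged in Remark~\ref{rGM}(c)) to confine $G_A(\mathbb{R})$ to one of the two strips, and then decide which strip by evaluating $G_A(0)=\arg(a_{11},a_{21})$. You merely spell out the $a_{21}\neq 0$ step and the base-point computation in more detail than the paper does.
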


\begin{proof}
 As the eigenvalues of $A$ are not real, the element $a_{21}$ cannot be
 zero and $G_A(\mathbb{R})\subset ]0,\pi[ \cup ]\pi,2\pi[.$ Under these
 conditions $G_A$ is continuous and so $G_A(\mathbb{R})$ is a connected
 subset of $]0,\pi[ \cup ]\pi,2\pi[.$
 Therefore, either $G_A(\mathbb{R})\subset ]0,\pi[$ or
 $G_A(\mathbb{R}) \subset ]\pi,2\pi[.$ As $G_A (0)=\arg (a_{11}, a_{21})
 \in [0,2\pi[$ we easily obtain the conclusion of this lemma.
\end{proof}

The following lemma allow us to consider $F$ only differentiable
instead of $C^1$.

\begin{lemma}\label{pdife} Let $H(x,y):\mathbb{R}^2 \rightarrow
\mathbb{R}$ be a differentiable map such that $\frac{\partial
H}{\partial x}$ is never zero. Then $\frac{\partial H}{\partial
x}$ is strictly positive or strictly negative on all
$\mathbb{R}^2$.
\end{lemma}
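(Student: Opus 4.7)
The plan is to combine Darboux's theorem on one-variable derivatives with the continuity of $H$ (which follows from differentiability, not $C^1$-ness) to force the sign of $\partial H/\partial x$ to be constant on $\R^2$.

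First I would fix $y_0 \in \R$ and look at the slice $h_{y_0}(x) = H(x,y_0)$. This is a differentiable function of one variable whose derivative $h_{y_0}'(x) = \partial H/\partial x(x,y_0)$ is nowhere zero. By the Darboux property of derivatives (a derivative on an interval satisfies the intermediate value theorem, even without continuity), a derivative that omits $0$ must have constant sign. Hence $\partial H/\partial x$ has constant sign on each horizontal line $\{(x,y_0): x \in \R\}$, so the rule $\sigma(y) = \sgn\!\bigl(\partial H/\partial x(x,y)\bigr)$ defines a function $\sigma:\R\to\{+1,-1\}$ independently of $x$. Equivalently, $H(\cdot,y)$ is strictly monotone on every horizontal line, with direction dictated by $\sigma(y)$ (via the mean value theorem).

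The remaining task is to show that $\sigma$ is constant, and I would do this by a clopen argument on $\R$. Put $A=\sigma^{-1}(+1)$ and $B=\sigma^{-1}(-1)$ and suppose, for contradiction, that a sequence $(y_n)\subset B$ converges to some $y_0\in A$. Pick any $x_0\in\R$. Since $\sigma(y_n)=-1$, strict monotonicity on the line $y=y_n$ gives $H(x_0+1,y_n) < H(x_0,y_n)$ for every $n$. Because $H$ is continuous at $(x_0,y_0)$ and at $(x_0+1,y_0)$, letting $n\to\infty$ yields $H(x_0+1,y_0)\le H(x_0,y_0)$. But $\sigma(y_0)=+1$ forces $H(x_0+1,y_0) > H(x_0,y_0)$, a contradiction. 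Hence $B$ is closed in $\R$, and symmetrically so is $A$. Since $\R$ is connected and $A\sqcup B = \R$, one of the two sets must be empty, which is the conclusion.

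The only real obstacle is the gap between \emph{differentiable} and $C^1$: we cannot appeal to continuity of $\partial H/\partial x$ directly. Darboux's theorem supplies exactly what is needed along each horizontal line, and continuity of the primitive $H$ itself (guaranteed by mere differentiability) supplies what is needed transversally. No hypothesis on $\partial H/\partial y$ is invoked, which is essential since the lemma is meant to be applied later in the $C^0$-partial-derivative regime.
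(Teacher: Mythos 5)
Your proof is correct and follows essentially the same route as the paper's: Darboux's theorem gives a constant sign for $\frac{\partial H}{\partial x}$ on each horizontal line, continuity of $H$ (not of its derivative) propagates that sign between nearby lines via strict monotonicity, and connectedness of $\mathbb{R}$ finishes the argument. The only cosmetic difference is that you show both sign-sets are closed while the paper shows one is open, which is the same clopen argument for a two-set partition.
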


\begin{proof} We claim first that for all $y_0\in \mathbb{R},$
the function $x\to \frac{\partial H}{\partial x}(x,y_0)$ defined
in the horizontal line $\{y=y_0\}$  of $\mathbb{R}^2$ has constant
sign. In fact, if we assumed that there exists $x_0, x_1\in
\mathbb{R}$ such that $\frac{\partial H}{\partial
x}(x_0,y_0)<0<\frac{\partial H}{\partial x}(x_1,y_0),$ then there
would exist, by the Darboux Theorem a point $x_2$ between $x_0$
and $x_1$ such that $\frac{\partial H}{\partial x}(x_2,y_0)=0,$
which would be a contradiction with the assumptions.

This implies that for every $y\in \mathbb{R},$ the function $x\to
 H(x,y)$ defined in the horizontal line $\{(x,y): x\in\mathbb{R}\}$
 is strictly monotone.

Fix $y_0\in \mathbb{R}.$ We shall only consider the case in which
the function $x\to \frac{\partial H}{\partial x}(x,y_0)$ is
positive, and so the function $x\to H(x,y_0)$  is strictly
increasing. We shall prove that for all $y_1\in\mathbb{R},$ close
enough to $y_0,$ the function $x\to \frac{\partial H}{\partial
x}(x,y_1)$ is positive. In fact, take two real numbers $x_0<x_1.$
Then
$$
H(x_1,y_0) = H(x_0,y_0) + \varepsilon, \quad \varepsilon > 0.
$$
By the continuity of $H$, if $y_1$ is near $y_0$ we have :
$$
H(x_0,y_1) < H(x_1,y_1)
$$
which implies that if $y_1$ is close enough to $y_0$, not only the
function $x\to H(x,y_1)$  must be strictly increasing but also the
function $x\to \frac{\partial H}{\partial x}(x,y_1)$ must be
positive. The lemma follows from the connectedness of
$\mathbb{R}^2.$
\end{proof}

The same argument of lemma above can be used to obtain

\begin{corollary}\label{cor-pdife} Let $U$ an open and connected subset
of $\mathbb{R}^2.$ If $H(x,y): U \rightarrow \mathbb{R}$ be a
differentiable map such that $\frac{\partial H}{\partial x}$ is
never zero. Then $\frac{\partial H}{\partial x}$ is strictly
positive or strictly negative on all $U$.
\end{corollary}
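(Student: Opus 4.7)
The plan is to imitate the proof of Lemma~\ref{pdife} almost verbatim, with one adjustment: since $U$ need not be convex (or even have connected horizontal slices), I cannot talk about "the horizontal line $\{y=y_0\}$" inside $U$. Instead I will argue locally on open discs contained in $U$, and only at the end invoke the connectedness of $U$ to globalize.

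First I would fix a point $p_0=(x_0,y_0)\in U$ and choose $r>0$ so that the open disc $B=B(p_0,r)$ is contained in $U$. For each $y_1$ with $|y_1-y_0|<r$ the slice $B\cap\{y=y_1\}$ is a (nonempty) open interval, so Darboux's theorem applied to $x\mapsto \partial H/\partial x(x,y_1)$ on that interval gives, exactly as in Lemma~\ref{pdife}, that this function has constant sign there. Consequently $x\mapsto H(x,y_1)$ is strictly monotone on every horizontal slice of $B$.

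Next I would show that the sign is the same on all slices of $B$. Suppose $\partial H/\partial x(\cdot,y_0)>0$ on the slice through $p_0$, and take $x_0'<x_1'$ in that slice, so that $H(x_1',y_0)=H(x_0',y_0)+\varepsilon$ for some $\varepsilon>0$. By continuity of $H$, for $y_1$ close enough to $y_0$ we still have $H(x_0',y_1)<H(x_1',y_1)$, which combined with the strict monotonicity already established on the slice $\{y=y_1\}$ forces $x\mapsto H(x,y_1)$ to be strictly increasing there, hence $\partial H/\partial x(\cdot,y_1)>0$ on the whole slice. Thus the set $U^+=\{p\in U:\partial H/\partial x(p)>0\}$ is open; by the symmetric argument so is $U^-=\{p\in U:\partial H/\partial x(p)<0\}$.

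Finally, since $\partial H/\partial x$ is never zero, $U=U^+\sqcup U^-$ is a disjoint union of two open sets, and the connectedness of $U$ forces one of them to be empty. I do not anticipate any real obstacle: the only difference from Lemma~\ref{pdife} is the need to replace global horizontal lines by small balls $B\subset U$ (whose horizontal slices are automatically connected), after which the Darboux and continuity steps go through unchanged.
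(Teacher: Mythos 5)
Your proof is correct and follows essentially the same route as the paper, which simply asserts that the argument of Lemma~\ref{pdife} carries over. Your localization to small discs (to handle the fact that horizontal slices of $U$ need not be connected) is exactly the detail needed to make that assertion rigorous, and the final openness-plus-connectedness step matches the paper's intent.
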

\begin{nota}
In the following $F=(f,g):\mathbb{R}^2 \to \mathbb{R}^2$ will always
denote a differentiable map such that $\spec(F)\cap\{0\}=\emptyset.$
Under these conditions, given $\, p\in \mathbb{R}^2\,$ and a
positive integer $\,n,\,$ we shall use the notations $\, G_p^n:=
G_{D_p(F^n)}.$
\end{nota}
\begin{lemma}\label{angular}
We have that
\begin{itemize}
\item[(1)] if $A$ and $B$ are two non singular $2\times 2$ matrices such that
$G_A(\mathbb{R})\cup G_B(\mathbb{R})\subset ]0,\pi[$ \; (resp.
$G_A(\mathbb{R})\cup G_B(\mathbb{R})\subset ]\pi,2\pi[$)\; then,
$$
G_{A B }(\mathbb{R})\subset ]0,2\pi[ \qquad (\mbox{resp. } \; G_{A B
}(\mathbb{R})\subset ]2\pi,4\pi[);
$$
\item[(2)] let $U$ be an open and connected subset of $\mathbb{R}^2$,
if $F:U^2 \to \mathbb{R}^2$ is a differentiable map such that  $
\spec(F) \cap \mathbb{R} = \emptyset, $ then  either
$$\{G_p^1(\mathbb{R}): p \in U\}\subset ]0,\pi[ \qquad \mbox{or}
\qquad \{G_p^1(\mathbb{R}): p\in U\}\subset ]\pi,2\pi[;$$ therefore,
either
$$\{G_p^2(\mathbb{R}): p \in U\}\subset ]0,2\pi[ \quad \mbox{or} \quad
\{G_p^2(\mathbb{R}): p \in U\}\subset ]2\pi,4\pi[.$$
\end{itemize}
\end{lemma}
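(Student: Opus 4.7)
The plan for part (1) is to express $\Phi_{AB}$ in terms of the composition $\Phi_A\circ\Phi_B$. Both $\Phi_{AB}$ and $\Phi_A\circ\Phi_B$ are continuous liftings $\mathbb{R}\to\mathbb{R}$ of the same circle map $\arg(v)\mapsto\arg(ABv)$, so they differ by a constant integer multiple of $2\pi$, which can be pinned down by evaluating at $\theta=0$. Subtracting $\theta$ and inserting $\pm\Phi_B(\theta)$ yields the key identity
\[
G_{AB}(\theta)=G_A(\Phi_B(\theta))+G_B(\theta)+c,
\]
with $c\in 2\pi\mathbb{Z}$ constant. The two conclusions then reduce to elementary interval arithmetic: if both $G_A$ and $G_B$ take values in $]0,\pi[$, the right-hand side (with the appropriate choice of $c$) lies in $]0,2\pi[$, while if both take values in $]\pi,2\pi[$, it lies in $]2\pi,4\pi[$.

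For part (2), the first step is to establish uniformly in $p\in U$ that $G_p^1(\mathbb{R})$ lies in a single one of the two intervals. Lemma~\ref{lemangulo} applied to $A=DF_p$ reduces this to the sign of $(DF_p)_{21}=\partial g/\partial x(p)$, which never vanishes under the hypothesis $\spec(F)\cap\mathbb{R}=\emptyset$. Since $g:U\to\mathbb{R}$ is differentiable on the connected open set $U$, Corollary~\ref{cor-pdife} forces this sign to be constant on $U$, giving the first dichotomy.

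The second dichotomy then follows from the chain rule $D(F^2)_p=DF_{F(p)}\cdot DF_p$ together with part (1), applied with $A=DF_{F(p)}$ and $B=DF_p$: both $G_A$ and $G_B$ lie in the same interval by the first dichotomy, so part (1) yields $G_p^2(\mathbb{R})\subset]0,2\pi[$ or $\subset]2\pi,4\pi[$ accordingly. The main obstacle I foresee lies in part (1): the normalization $\Phi_M(0)\in[0,2\pi[$ for each individual matrix $M$ is not automatically compatible with composition, so pinning down the lifting constant $c$ requires a careful separate check at $\theta=0$ in each of the two regimes, and this is precisely what produces the two distinct target intervals in the conclusion.
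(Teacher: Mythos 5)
Your proposal is correct and follows essentially the same route as the paper: the paper's entire proof of part (1) is the one-line remark that it results from "composing $G_A$ and $G_B$", which is precisely the identity $G_{AB}(\theta)=G_A(\Phi_B(\theta))+G_B(\theta)+c$ you derive, and part (2) is obtained, as you do, from Lemma~\ref{lemangulo}, Corollary~\ref{cor-pdife} and the chain rule. Your explicit treatment of the lifting constant $c$ (noting that the stated interval $]2\pi,4\pi[$ corresponds to taking the composition lift rather than renormalizing $\Phi_{AB}(0)$ into $[0,2\pi[$) is a detail the paper glosses over, but it does not change the approach.
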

\begin{proof}
The first assertion is the result of compose $G_A$ and $G_B$, the
second one follows immediately from Lemma~\ref{lemangulo} and
Corollary~\ref{cor-pdife}.
\end{proof}
\bigskip

\begin{proof}[{\bf Proof of Theorem 3}]
As $ \spec(F) \cap \mathbb{R} = \emptyset,$ we have that $F$ is
non-singular and
$$
a_{21}(p) = \frac{\partial g}{\partial x}(p)
$$
is not zero, for all $p\in\mathbb{R}^2$. By using Lemma \ref{pdife}
it has a constant sign all over $\mathbb{R}^2$. By using Lemmas
\ref{angular} and \ref{lemangulo} we obtain that
  $ \spec(F^2) \cap [0, \infty [ = \emptyset.$ We conclude, by
Corollary \ref{propptofijo}, that
  $F^2$
  has at most one fixed point.
\end{proof}

\bigskip

\begin{eje}
\begin{eqnarray}
x_1 & = & (\alpha  x - \beta y )(1+x^2+y^2) \\ \nonumber y_1 & = & (\beta x + \alpha y)(1+x^2+y^2) \\
\nonumber
\end{eqnarray}
\end{eje}
The eigenvalues of the map are:
$$
\alpha ( 1+2r^2) \pm \sqrt{(\alpha^2 - 3 \beta^2 )r^4 - 4 \beta^2
r^2 - \beta^2 }
$$
If $ \alpha^2 < 3 \beta^2 $ they are not real.

By Theorem \ref {pdosper} it has not period-$2$ orbits. In fact, the
unique bounded orbit is the origin since in polar coordinates:

$$
r_1 = \sqrt{\alpha^2 + \beta^2} \;(r + r^3)
$$

\begin{eje} There does not exist a quadratic polynomial map
$F=(f,g):\mathbb{R}^2 \rightarrow \mathbb{R}^2$ verifying the
hypothesis of Theorem (\ref{pdosper}).
\end{eje}

\begin{proof}
Suppose that
\begin{eqnarray*}
  f(x,y) &=& a_{11}x+a_{12}y +b_{11}x^2+b_{12}xy+b_{13}y^2\\
  g(x,y) &=& a_{21}x+a_{22}y +b_{21}x^2+b_{22}xy+b_{23}y^2
\end{eqnarray*}
As $D_pF$ does not have real eigenvalues:
$$
\frac{\partial f}{\partial y}, \quad \frac{\partial g}{\partial x}
$$ can not be
zero on any point of the plane. These partials are affine
functions, therefore they must be constant. Then:
$$
b_{12}=b_{13}=b_{21}=b_{22}=0
$$
Now, the eigenvalues are:
$$
\left( \frac{1}{2} \right) \left( a_{11} + a_{22} + 2 b_{11}  x +
        2 b_{23}y \pm \sqrt{4a_{12} a_{21} + (a_{11} - a_{22} + 2 b_{11} x - 2 b_{23}
        y )^2}\; \right)
$$
As the discriminant can not be positive:
$$
b_{11}=b_{23}=0
$$
That is to say, the map $F$ is linear.

\end{proof}

\section{The limiting case: proof of
Theorem~\ref{pdosperdos}}\label{Theorem pdosperdos}

In this section we are going to generalize Theorem (\ref{pdosper})
by allowing multiple eigenvalues but asking the map be of class
$C^1$.

\begin{lemma} \label{lemangdo}
Suppose that the matrix $A$ has a double nonzero real eigenvalue;
then $G_A(\mathbb{R})$ is contained exactly in only one of the
following intervals: $$[0,\pi[,]0,\pi],[\pi,2\pi[,]\pi,2 \pi].$$
\end{lemma}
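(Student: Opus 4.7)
The plan is to combine the orientation-forced $\pi$-periodicity of $G_A$ with the rigidity of the eigenspace in the double-eigenvalue case. Since $\det A=\lambda^{2}>0$, the lift $\Phi_A$ is strictly increasing, and the identity $A(-v)=-Av$ gives $\Phi_A(\theta+\pi)=\Phi_A(\theta)+\pi$; hence $G_A$ is $\pi$-periodic, so it is enough to control $G_A$ on a single interval of length $\pi$.

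I would first dispose of the homothety case $A=\lambda I$: then $G_A$ is constant, equal to $0$ if $\lambda>0$ and to $\pi$ if $\lambda<0$, and the conclusion is immediate. In the remaining case, $A-\lambda I$ is a nonzero nilpotent $2\times 2$ matrix, so its kernel is one-dimensional, spanned by some $v(\alpha_0)$; this is the unique eigendirection modulo $\pi$. Reading Remark~\ref{rGM}(a)--(b) in both directions yields $G_A(\alpha_0)\in 2\pi\mathbb{Z}$ when $\lambda>0$ and $G_A(\alpha_0)\in(2\mathbb{Z}+1)\pi$ when $\lambda<0$, and simultaneously forces $G_A(\theta)\notin\pi\mathbb{Z}$ for every $\theta\not\equiv\alpha_0\pmod{\pi}$ (otherwise a second eigendirection would appear, contradicting uniqueness of the kernel).

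The trapping step is then a one-line IVT argument, in the spirit of Lemma~\ref{lemangulo}: on the open interval $(\alpha_0,\alpha_0+\pi)$, $G_A$ is continuous and misses $\pi\mathbb{Z}$, so its image, being connected, lies in a single open component $(n\pi,(n+1)\pi)$ of $\mathbb{R}\setminus\pi\mathbb{Z}$. The endpoint value $G_A(\alpha_0)=G_A(\alpha_0+\pi)$, an integer multiple of $\pi$, must coincide with one of the two endpoints $n\pi$ or $(n+1)\pi$; hence $G_A(\mathbb{R})=G_A([\alpha_0,\alpha_0+\pi])$ is contained in $[n\pi,(n+1)\pi[$ or in $]n\pi,(n+1)\pi]$. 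The parity of $n$ is determined by the sign of $\lambda$ (even when $\lambda>0$, odd when $\lambda<0$), so reducing modulo $2\pi$ places the image in exactly one of the four arcs $[0,\pi[$, $]0,\pi]$, $[\pi,2\pi[$, $]\pi,2\pi]$ of $S^{1}=\mathbb{R}/2\pi\mathbb{Z}$.

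The main subtlety, and likely the only real pitfall, is the bookkeeping between the continuous lift $\Phi_A$ and the four intervals of the lemma. Since $\Phi_A$ is determined by $\Phi_A(0)\in[0,2\pi[$, the image of $G_A$ need not remain inside $[0,2\pi[$ --- for example, the upper-triangular Jordan block with $\lambda>0$ gives $G_A(0)=0$ but $G_A$ slightly negative just off $0$ --- so the four intervals have to be interpreted as arcs of $S^{1}$ after reduction modulo $2\pi$. The substantive content of the proof, once this is understood, is the short IVT trapping driven by the uniqueness of the eigenline.
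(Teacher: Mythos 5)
Your argument is correct, and its engine --- the unique eigenline forces $G_A$ to avoid $\pi\mathbb{Z}$ except along one direction mod $\pi$, so connectedness traps $G_A\bigl((\alpha_0,\alpha_0+\pi)\bigr)$ in one component of $\mathbb{R}\setminus\pi\mathbb{Z}$, with exactly one endpoint attained --- is the same intermediate-value idea the paper uses. The paper packages it as a contradiction: a \emph{crossing} of one of the lines $\mathbb{R}\times\{0\}$, $\mathbb{R}\times\{\pi\}$, $\mathbb{R}\times\{2\pi\}$ together with $2\pi$-periodicity would force a second zero of $G_A-k\pi$ within one period, hence a second real eigendirection. Your version, localized at the eigenangle $\alpha_0$ and using the sharper relation $\Phi_A(\theta+\pi)=\Phi_A(\theta)+\pi$, is tighter: in the paper's argument the second zero produced inside a $2\pi$-period could a priori be $\theta_0+\pi$, i.e.\ the \emph{same} line, and one needs the $\pi$-periodicity you invoke to push the extra zero into an interval of length $\pi$ where it genuinely yields a new eigendirection. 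More importantly, the ``subtlety'' you flag at the end is real and the paper passes over it: for $A=\left(\begin{smallmatrix}1&1\\0&1\end{smallmatrix}\right)$ one has $G_A(0)=0$ but $G_A(\pi/2)=-\pi/4$, so $G_A(\mathbb{R})$ is not literally contained in any of the four listed intervals; the conclusion holds only after reduction modulo $2\pi$ (equivalently, after renormalizing the lift, as the paper itself does later with the functions $\widetilde G_p^1$ in the proof of Theorem~\ref{pdosperdos}), and making this explicit is an improvement on the published proof. One cosmetic slip: your parenthetical ``$n$ even when $\lambda>0$, odd when $\lambda<0$'' is not quite right, since for $\lambda>0$ the attained endpoint may be $(n+1)\pi$ with $n$ odd (giving the arc $]\pi,2\pi]$ rather than $[0,\pi[$); this does not affect the conclusion, as all four arcs are accounted for.
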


\begin{proof}[\textbf{Proof}] The lemma follows from the following
claim

\begin{enumerate}
  \item [(1)] The graph of $G_A(\theta)$ intersects at most one of the
  following three lines: $\mathbb{R} \times \{0\},$ $\mathbb{R} \times
  \{\pi\},$ $\mathbb{R} \times \{2\pi\}$ and cannot cross anyone.
\end{enumerate}
In fact, suppose by contradiction that the graph of $G_A$ crosses
the line $\mathbb{R} \times \{0\}$ at the point $(\theta_0,0)$. As
$G_A$ is a bounded $2 \pi$-periodic map the graph of $G_A$ must
cross the line $\mathbb{R} \times \{0\}$ at every point of the form
$\theta_0 + 2 n \pi$, with $n \in \mathbb{Z}$. Hence $G_A$ must
cross the line $\mathbb{R} \times \{0\}$ at some point
$(\theta_1,0)$ with $\theta_0 < \theta_1 < \theta_0 + 2\pi$. This is
a contradiction because $A$ does not have two different real
eigenvalues. In a similar way $G_A$ cannot cross the other two lines
\end{proof}

\begin{proof}[{\bf Proof of Theorem~\ref{pdosperdos}}]
We will only prove (a). In order to apply Proposition
\ref{propptofijo}, we must prove that $F^2$ satisfies:
\begin{equation}
\label{econdi}
 \spec(F^2) \cap [1,1+\epsilon[=\emptyset .
\end{equation}

Let $$M=\{p \in \mathbb{R}^2:G_p^1 \, (\mathbb{R}) \subset
[0,\pi]\},\qquad   N=\{p \in \mathbb{R}^2:G_p^1 \, (\mathbb{R})
\subset [\pi,2\pi]\}.$$

It follows from Lemma \ref{lemangdo} and the fact that $DF_p\;$ is
not a homothety that

\begin{enumerate}
  \item [(1)] $\mathbb{R}^2=M \cup N$ and $M \cap N=\emptyset$.
\end{enumerate}
We claim that

\begin{enumerate}
\item [(2)]  $M$ is closed.
\end{enumerate}
In fact, let suppose by contradiction that there exists $p \in N$
and a sequence $\{p_n\}$ in $M$ such that $p_n \to p$. As $G^1_p
\, (0) \in [\pi,2\pi[$ and, for all $n \in \mathbb{N}$,
$G^1_{p_n}(0) \in [0,\pi]$ we obtain that $G^1_p(0)=\pi$ and
$G^1_{p_n}(0) \to \pi=G^1_p(0)$. Hence, using the fact that $G_p$
and every $G_{p_n}$ is $2 \pi$-periodic and also that $F$ is of
class $C^1$, we obtain that $G_{p_n}$ converges uniformly to
$G_p$. This implies that $G_p \, (\mathbb{R}) \equiv \pi$ which is
a contradiction becaus $D F_p$ is not an homothety.

\mm

Now we claim that

\begin{enumerate}
\item [(3)] $N$ is closed.
\end{enumerate}

In fact, the proof is similar to (2). However instead of the
functions  $\{G_p^1: p\in \R^2\}$ it is convenient to consider the
functions $\{\widetilde G_p^1: p\in \R^2\}$ given by $\widetilde
G_p^1= G_p^1$ if $p\in M,$ and $\widetilde G_p^1 = G_p^1 -2\pi$ if
$p\in N.$ If in the definition of $\Phi_A$ at the beginning of
Section~\ref{Theorem pdosper} we had requested $\Phi_A(0)\in
[-\pi,\pi[,$ we would had obtained the functions  $\;\widetilde
G_p^1\;$ instead of the functions $\;G_p^1.\;$ In this way $M=\{p
\in \mathbb{R}^2:\widetilde G_p^1 \, (\mathbb{R}) \subset [0,\pi]\}$
and $N=\{p \in \mathbb{R}^2:\widetilde G_p^1 \, (\mathbb{R}) \subset
[-\pi,0]\}.$ Then the proof of item (3) proceeds in a similar way to
that of item (2).

 \mm

 As $\mathbb{R}^2$ is connected, we have that
 either $\mathbb{R}^2=M$ or $\mathbb{R}^2=N$.
We shall proceed considering only the case

\begin{enumerate}
\item [(4)] $\mathbb{R}^2=M$.
\end{enumerate}
Let $p \in \mathbb{R}^2$, by Lemmas \ref{lemangulo} and
\ref{lemangdo} we obtain the following.

\begin{enumerate}
\item [(5)] If  $\;\spec \, (DFp) \, \cap \,
\mathbb{R}=\emptyset\;$ or $\;\spec \, (DF_{F(p)}) \cap
\mathbb{R}=\emptyset$,\; then $\;G^2_p \, (\mathbb{R}) \subset
]0,2 \pi[\;$ and so $\spec \, (D(F^2)_p) \, \cap \, [0,\infty[ =
\emptyset$.
\end{enumerate}
Also

\begin{enumerate}
\item [(6)] if $\;\spec \, (DF_p) \cup \spec \, (DF_{F(p)})
\subset \; ]-1,1[$,\; then $\;\spec \, (D(F^2)_p) \cap \{x \in
\mathbb{R} : |x| \geq 1\}=\emptyset$.
\end{enumerate}
In fact, if for some $\theta \in \mathbb{R}$, \;$G^2_p \,
(\theta)=0$,\; then (as $\;G^{1}_p \,(\mathbb{R}) \cup G^1_{F(p)}
(\mathbb{R}) \subset [0, \pi])\;G^1_p \, (\theta)=0$ and
$G^1_{F(p)} \, (\theta)=0$. Hence the angle $\theta$ corresponds
to a common eigenspace of both $DF_p$ and $DF_{F(p)}$ and
consequently $|{\lambda_{F^2(p)}}|=|\lambda_{p}|
|{\lambda_{F(p)}}|$ because they are on the same line. This and
the assumptions prove (7). Summaring (\ref{econdi}) is satisfied.
\end{proof}

\begin{eje}
\begin{eqnarray}
x_1 & = & x-y+y^2-y^3 \\ \nonumber y_1 & = & x+\frac{5}{3}y+y^2 \\
\nonumber
\end{eqnarray}
\end{eje}
The eigenvalues of the map are:
$$
\frac{4}{3}+y \pm \left( \frac{\sqrt{2}}{3}
\right)\sqrt{-(2-3y)^2}
$$
The discriminant has a maximum at $y=\frac{2}{3}$, therefore the
map never has two different simple real eigenvalues. Besides, over
this line the Jacobian  $DF$ assumes the value:
\begin{equation}
\left(
\begin{matrix}
 1 & -1 \\
 1 & 3
\end{matrix}
\right)
\end{equation}
The eigenvalue is $2$ and the eigenspace  is one dimensional,
generated by $(-1,1)$ and the map is not a homothety.

\section{Maps $F$ with $\#\fix(F^n)\le 1$}\label{Maps F}

Assume that the eigenvalues of $A$ are not real. The
generalization of Theorem (\ref{pdosper}) to the case of
period-$n$ orbits, $n
> 2$ needs a more accurate determination of the angular difference
 $\theta_1 - \theta $. Therefore we look for the extreme values of $G_A (\theta )$

Let us introduce the following notation:
\begin{eqnarray*}
  r_{11} &=& a_{11}^2+a_{21}^2 \\
  r_{22} &=& a_{12}^2+a_{22}^2 \\
  r_{12} &=& a_{11}a_{12}+a_{21}a_{22}
\end{eqnarray*}

\begin{proposition}\label{pmaxmin}The maximum and minimum of $ G_A (\theta )$ are:
\begin{equation}
\label{eqmaxmin}
 \arctan \left(  \frac{\tra (a_{12}-a_{21}) \pm 2
\sqrt{\dt (r_{11}+r_{22}-2 \dt)} }{(a_{12}-a_{21})^2 - 4 \dt}
\right)
\end{equation}
\end{proposition}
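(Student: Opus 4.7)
The plan is to locate the critical points of $G_A(\theta)$ by differentiating, and then to evaluate $\tan G_A$ there, eliminating $\theta$ to get a closed form in the matrix entries. Writing $v(\theta)=(\cos\theta,\sin\theta)$ and $Av=(a_{11}\cos\theta+a_{12}\sin\theta,\ a_{21}\cos\theta+a_{22}\sin\theta)$, a direct differentiation of $\tan\Phi_A(\theta)$, in which the cross term $N'D-ND'$ collapses to $\det A$, gives
\[
\Phi_A'(\theta)=\frac{\det A}{|Av|^{2}},\qquad |Av|^{2}=r_{11}\cos^{2}\theta+2r_{12}\sin\theta\cos\theta+r_{22}\sin^{2}\theta.
\]
So the critical points of $G_A=\Phi_A-\theta$ are exactly the $\theta$ with $|Av|^{2}=\det A$, which in double-angle form reads $(r_{11}-r_{22})\cos 2\theta+2r_{12}\sin 2\theta=2\det A-(r_{11}+r_{22})$.

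At such a critical $\theta$, set $T=\tan G_A(\theta)=\langle Av,v^{\perp}\rangle/\langle Av,v\rangle$ with $v^{\perp}=(-\sin\theta,\cos\theta)$. Both $\langle Av,v\rangle$ and $\langle Av,v^{\perp}\rangle$ are affine in $(\cos 2\theta,\sin 2\theta)$, so $\langle Av,v^{\perp}\rangle=T\langle Av,v\rangle$ is a second affine relation with coefficients depending linearly on $T$. Solving this $2\times 2$ linear system for $(\cos 2\theta,\sin 2\theta)$ and imposing $\cos^{2}2\theta+\sin^{2}2\theta=1$ eliminates $\theta$ and yields a quadratic in $T$ whose two roots are the extremal values of $G_A$.

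The main obstacle is the bookkeeping in that elimination: the raw quadratic has bulky coefficients, and reducing them to the stated numerator and denominator requires systematic use of the algebraic identities $r_{11}r_{22}-r_{12}^{2}=(\det A)^{2}$ and $(a_{12}-a_{21})^{2}-4\det A=(a_{12}+a_{21})^{2}-4a_{11}a_{22}$. A conceptually shorter cross-check is to encode $A$ as the $\mathbb{R}$-linear map $z\mapsto\alpha z+\beta\bar z$ on $\mathbb{C}$ with $\alpha=\tfrac{1}{2}(\tr A+i(a_{21}-a_{12}))$ and $\beta=\tfrac{1}{2}((a_{11}-a_{22})+i(a_{12}+a_{21}))$; then $G_A(\theta)=\arg(\alpha+\beta e^{-2i\theta})$ is the argument, seen from the origin, of a point moving on the circle of radius $|\beta|$ centered at $\alpha$. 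Its extrema are attained at the tangent directions from $0$, and the identities $|\alpha|^{2}-|\beta|^{2}=\det A$ and $4|\beta|^{2}=r_{11}+r_{22}-2\det A$ convert the tangent-addition formula directly into the expression in the statement.
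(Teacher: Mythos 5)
Your proposal is correct, and it diverges from the paper's proof in the way it finishes. The first half coincides with the paper: both compute $\Phi_A'(\theta)=\det A/|Av|^2$, hence locate the critical points of $G_A$ by the condition $(r_{11}-r_{22})\cos 2\theta+2r_{12}\sin 2\theta=2\det A-(r_{11}+r_{22})$. From there the paper solves this equation explicitly for $\cos 2\theta$ and $\sin 2\theta$, substitutes to get $\tan\theta_1$ and $\tan\theta$ separately, and applies the tangent-subtraction formula; your first route instead eliminates $\theta$ by pairing the critical-point equation with $\langle Av,v^{\perp}\rangle=T\langle Av,v\rangle$ (both affine in $(\cos 2\theta,\sin 2\theta)$) and imposing $\cos^2 2\theta+\sin^2 2\theta=1$, which correctly produces a quadratic in $T=\tan G_A$ --- though, as you concede, you do not carry the bookkeeping through. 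What actually completes your argument is the second route: writing $A$ as $z\mapsto\alpha z+\beta\bar z$ so that $G_A(\theta)=\arg(\alpha+\beta e^{-2i\theta})$, the extrema become the tangent angles $\arg\alpha\pm\arcsin(|\beta|/|\alpha|)$ from the origin to the circle of radius $|\beta|$ about $\alpha$; with $|\alpha|^2-|\beta|^2=\det A$ and $4|\beta|^2=r_{11}+r_{22}-2\det A$ the tangent-addition formula does reduce to the displayed expression (I checked the algebra), so this is a complete and genuinely different proof. It is also more illuminating than the paper's: it makes visible the implicit hypothesis $\det A>0$ (equivalently $|\alpha|>|\beta|$, automatic in the paper's setting of non-real eigenvalues) without which the origin lies inside or on the circle and $G_A$ has no extrema, and it explains the degenerate case $\beta=0$ (the paper's ``Jordan normal form'' case, $a_{11}=a_{22}$, $a_{12}=-a_{21}$) where $G_A$ is constant. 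If you intend the elimination route to stand on its own, you should either execute it or drop it in favour of the circle argument.
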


\begin{proof}

The function $G_A (\theta )$ can be expressed as:
\begin{eqnarray*}
G _A(\theta ) &=& - \theta + \theta_1 \\
 &=& - \theta + \arctan \frac{a_{21} \cos (\theta )+
a_{22} \sin (\theta )}{a_{11} \cos (\theta )+ a_{12} \sin (\theta)} \\
  &=& - \theta + \arctan \frac{a_{21}(1+  \cos (2 \theta )) +
a_{22} \sin (2 \theta )}{a_{11}(1+  \cos (2 \theta )+ a_{12} \sin
(2 \theta)}
\end{eqnarray*}

The derivative of $G_A (\theta )$ is:

$$
-1 +\frac{\dt}{r_{11} \cos ^2 (\theta ) +r_{22} \sin ^2 (\theta
)+2r_{12} \cos  (\theta ) \sin  (\theta )}
$$

This derivative vanishes if and only if:

\begin{equation}
\label{eqderg}
 (r_{11} - r_{22}) \cos (2\theta) + 2 r_{12} \sin
(2\theta) = 2 \dt - r_{11} - r_{22}
\end{equation}

It follows from this equation that $\cos (2\theta)$  is:
$$
 \frac{r_{22}-r_{11}}{r_{11}+r_{22}+2 \dt}
  \pm
\frac{4r_{12}}{r_{11}+r_{22}+2 \dt}
\sqrt{\frac{\dt}{r_{11}+r_{22}-2 \dt}}
$$
and the value of $\sin (2\theta )$ can be also obtained from
\ref{eqderg} and $\cos (2\theta)$.

The second derivative of $G_A (\theta )$ with the values of the
sinus and cosinus verifying \ref{eqderg} is:
$$
\mp \frac{2 \sqrt{\dt (r_{11}+r_{22}-2 \dt)}}{\dt}
$$
As $A$ is non singular, this second derivative vanish if and only
if:
$$
r_{11}+r_{22}=2 \dt
$$
equivalently:
$$
(a_{11}-a_{22})^2+(a_{21}+a_{12})^2=0
$$
In this case $A$ is in Jordan normal form, $G_A(\theta)$ is
constant and takes the value of the expression \ref{eqmaxmin}, now
reduced to a unique value.

 If the second derivative of $G_A$ does not vanish, each pair of
 the values of the sinus and cosinus corresponds to
a point where $G_A$ takes a minimum or a maximum. We assume this
possibility.

By direct substitution of $\theta_1$ we obtain:
$$
\tan \theta_1 = \frac{a_{11}a_{21}+a_{12}a_{22} \mp \sqrt{\dt
(r_{11}+r_{22}-2 \dt)}}{a_{11}^2+a_{12}^2 - \dt}
$$
By applying $A^{-1}$ we get the value of $\tan \theta $ where
$G(\theta)$ has an extremum:
$$
\tan \theta = \frac{-r_{12} \pm \sqrt{\dt (r_{11}+r_{22}-2
\dt)}}{r_{22} - \dt}
$$
Then, the tangent of $\theta_1 - \theta $ is
$$
\tan (\theta_1 - \theta ) = \left(  \frac{\tra (a_{12}-a_{21}) \mp
2 \sqrt{\dt (r_{11}+r_{22}-2 \dt)} }{(a_{12}-a_{21})^2 - 4 \dt}
\right)
$$
\end{proof}

Finally, by combining this proposition with the following obvious
proposition, we can find maps without some period-$n$ orbits:

\begin{proposition}\label{tmaxmin}Let $F$ be a $C^1$ map such that,
 $F(0) = 0 $ and $D_pF$
is uniformly close to a constant matrix $A$. If $\spec (A),\;
\spec (A^2), \dots \spec (A^n) $ are disjoint of $[1, 1+ \epsilon
[$, then $F$ does not have any $k$-periodic orbit , $1 \leq k \leq
n $.
\end{proposition}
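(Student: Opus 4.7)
The plan is to apply Corollary~\ref{propptofijo} to each iterate $F^k$, for $k=1,\dots,n$, and then use $F(0)=0$ to pin down the unique fixed point.

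First I would invoke the Chain Rule,
\[
D_p(F^k)=DF_{F^{k-1}(p)}\cdot DF_{F^{k-2}(p)}\cdots DF_{F(p)}\cdot DF_p,
\]
and observe by a standard telescoping estimate that if $\sup_{q\in\mathbb{R}^2}\|DF_q-A\|<\delta$, then $\|D_p(F^k)-A^k\|\le c(A,n)\,\delta$ for every $p\in\mathbb{R}^2$ and every $k\le n$, with the constant $c(A,n)$ depending only on $\|A\|$ and $n$.

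Next I would combine this with continuous dependence of the spectrum on matrix entries. Each $\spec(A^k)$ is a finite subset of $\mathbb{C}$ disjoint from $[1,1+\epsilon[$: its points are either non-real, real and strictly less than $1$, or real and $\ge 1+\epsilon$. Choosing $\epsilon':=\epsilon/2$, each of these three possibilities remains stable under sufficiently small perturbations---a non-real eigenvalue stays non-real, an eigenvalue strictly less than $1$ stays below $1$, and an eigenvalue $\ge 1+\epsilon$ stays $>1+\epsilon/2$. Since $\bigcup_{k\le n}\spec(A^k)$ is finite, one can find a single $\delta_0>0$ such that $\|M-A^k\|\le c(A,n)\,\delta_0$ forces $\spec(M)\cap[1,1+\epsilon'[=\emptyset$, uniformly in $k\le n$.

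Assembling the pieces: if $D_pF$ is uniformly $\delta_0$-close to $A$, then $\spec(F^k)\cap[1,1+\epsilon'[=\emptyset$ for every $k=1,\dots,n$, so Corollary~\ref{propptofijo} gives $\#\fix(F^k)\le 1$. Since $F(0)=0$ forces $F^k(0)=0$, the origin is the unique fixed point of each $F^k$, whence $F$ has no $k$-periodic orbit other than $\{0\}$ for $1\le k\le n$. The only mildly delicate point is choosing $\epsilon'$ and $\delta_0$ so as to handle all $k\le n$ simultaneously; but this is routine once one invokes the three-case analysis above together with the finiteness of $\bigcup_{k\le n}\spec(A^k)$.
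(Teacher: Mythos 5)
Your proposal is correct and is exactly the route the paper intends: the text offers no written proof (it labels the proposition ``obvious''), but the surrounding discussion makes clear that the argument is to force $\spec(F^k)\cap[1,1+\epsilon'[=\emptyset$ by spectral continuity and then invoke Corollary~\ref{propptofijo} together with $F^k(0)=0$. Your handling of the half-open interval by passing to $\epsilon'=\epsilon/2$, and your remark that the origin itself survives as the unique ($1$-periodic) orbit, are exactly the details the paper leaves implicit.
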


\begin{eje}
\begin{eqnarray}
x_1 & = & 2x-3y \\ \nonumber y_1 & = & -3x+y \\
\nonumber
\end{eqnarray}
\end{eje}

The eigenvalues are:
$$
\frac{1}{2}\left( 3 \pm \imath \sqrt{35}\right)
$$

The expressions of the proposition  (\ref{pmaxmin}), gives the
following interval of variation
$$
\theta_1 - \theta \in [5.02641, 5.3256 ]
$$

They correspond to the initial values: $\theta= 1.41379$, $\theta=
2.83495$

Successive iterations make $\theta_n - \theta $  vary inside the
intervals:
$$
[3.7696, 4.3681 ], \;[2.5128, 3.4106 ], \;[1.2560, 2.4531 ],
\;[-0.00070282, 1.4955 ]
$$

In the fifth iteration, the corresponding map can have a positive
real eigenvalue.

Consider now a map such that whose spectrum is near $A$ all over
$\mathbb{R}^2$. For instance:

\begin{eqnarray}
x_1 & = & 2x-3y + \frac{\epsilon x}{\sqrt{1+x^2+y^2}}\\
 \nonumber y_1 & = & -3x+y + \frac{\epsilon y}{\sqrt{1+x^2+y^2}}\\
\nonumber
\end{eqnarray}

Property (\ref{pmaxmin}) ensures that if  $\epsilon $ is small
enough, the unique periodic orbit with period less than four is
the orbit of the origin.

\end{document}